
\documentclass[12pt]{amsart}

\setlength{\topmargin}{-2mm}
\setlength{\oddsidemargin}{3mm}
\setlength{\evensidemargin}{3mm}
\setlength{\textwidth}{154mm}
\setlength{\textheight}{238mm}



\newcommand{\V}{{\mathfrak V}}

\newcommand{\Ni}{{\mathfrak N}}
\newcommand{\B}{{\mathfrak B}}

\newcommand{\X}{{\mathfrak X}}
\newcommand{\Y}{{\mathfrak Y}}

\newcommand{\U}{{\mathfrak U}}
\newcommand{\A}{{\mathfrak A}}

\newcommand{\Aut}[1]{\mathrm{Aut}\left( #1 \right)}

\newcommand{\var}[1]{\mathrm{var}\left( #1 \right)}

\newcommand{\Wr}{\,\mathrm{Wr}\,}
\newcommand{\Wrr}{\,\mathrm{wr}\,}


\theoremstyle{definition}
  
\theoremstyle{plain}
  
\newtheorem{Theorem}{\sc Theorem}  
\newtheorem{Proposition}{\sc Proposition}

\theoremstyle{remark}
 
\newtheorem{Example}{\sc Example} 


\begin{document}


\subjclass{20E22, 20E10, 20K01, 20K25.}
\keywords{Wreath products, varieties of groups, products of varieties of groups, abelian groups, $p$-groups, nilpotent groups, finite groups.}

\title[Varieties generated by wreath products]{Varieties generated by wreath products of abelian and nilpotent groups}
\author{Vahagn H.~Mikaelian}
\thanks{Delivered as a plenary talk at the International Conference ``Mal'tsev Meeting'', Novosibirsk, Russia, November 10-13, 2014. 
The concise versions of this text will appear in Russian in {\it ``Algebra i Logika''} (L.S.~Sobolev Mathematics Institute) and in English in {\it ``Algebra and Logic''} (Springer).
The author was supported in part by joint grant 13RF-030 of RFBR and SCS MES RA, and by 13-1A246 grant of SCS MES RA}
\address{Informatics and Applied Mathematics Department, Yerevan State University, Yerevan 0025, Armenia.}
\email{v.mikaelian@gmail.com}

\date{12 November 2014}


\begin{abstract}
We present a survey of our recent research on varieties, generated by wreath products of groups. In particular, the full classification of all cases, when the (cartesian or direct) wreath product of any abelian groups $A$ and $B$ generates the product variety $\var{A} \var{B}$, is given. The analog of this is given for sets of abelian groups.
We also present new, unpublished research on cases when the similar question is considered for non-abelian, finite groups. 
\end{abstract}

\maketitle


\section{Introduction and background information}
\label{Introduction}

\noindent
The aim of this note is to present our research of recent years on varieties, generated by (standard) wreath products of abelian groups~\cite{AwrB_paper}--\cite{wreath products Prilozh}, and to display some newest, yet unpublished progress in this direction, concerning wreath products of non-abelian groups also. In particular, we give complete classification of all cases when for the abelian groups $A$ and $B$ their cartesian (or direct) wreath product generates the variety $\var{A} \var{B}$. The analog of this also is established for arbitrary {\it sets} of abelian groups. 
%
Since cartesian wreath product $A \Wr B$ and the direct wreath product $A \Wrr B$ of any groups $A$ and $B$ always generate the same variety of groups~\cite[Statement 22.31]{HannaNeumann}, below we will formulate all results for cartesian wreath products, keeping in mind that their analogs also are true for direct wreath products.

\vskip5mm
Wreath products perhaps are the most useful tool to study the product varieties of groups. 
A product $\U \V$ of varieties $\U$ and $\V$ is defined as the variety of extensions of all groups $A \in \U$ by all groups $B \in \V$. By the Kaloujnine-Krassner theorem extensions of $A$ by $B$ can be embedded into the cartesian wreath product $A \Wr B$~\cite{KaloujnineKrasner}. 
Take $A$ and $B$ to be some fixed groups, generating the varieties $\U$ and $\V$ respectively. If 
\begin{equation}
\label{EQUATION_general}    
\var{A \Wr B}=\U \V,
\end{equation}
then the extensions of $A$ by $B$ already are enough to generate $\U \V =\var{A} \var{B}$, and we can restrict ourselves to consideration of $\var{A \Wr B}$, which is easier to study rather than to check all the extensions in $\U \V$.

Examples, when this approach is used, are too many to list, so let us recall the earliest ones (see Chapter 2 of Hanna Neumann's monograph~\cite{HannaNeumann} for references). Using the relatively free groups $F_\infty(\U)$ of infinite rank for varieties $\U$, we get an easy consequence of the Kaloujnine-Krassner theorem:
\begin{equation}
\label{EQUATION_free_groups}
 \var{F_\infty(\U) \Wr {F_\infty(\V)}}= \var{F_\infty(\U)} \var{F_\infty(\V)} =\U \V.
\end{equation}
Moreover, according to~\cite[Statement 22.23]{HannaNeumann}, in (\ref{EQUATION_free_groups}) the group $F_\infty(\U)$ can be replaced by any group $A$, generating $\U$. Namely: 
$$ \var{A \Wr {F_\infty(\V)}}= \var{A} \var{F_\infty(\V)} =\U \V.\
$$ 
Further, by the theorem of G.~Baumslag and Neumanns~\cite{B3,B+3N}, the group $F_\infty(\V)$ can be replaced by any group that discriminates $\V$ because for any group $A$ generating $\U$ and for any group $B$ discriminating $\V$    the wreath product $A \Wr B$ discriminates and generates $\U \V$. We will below get the converse of this for abelian groups in Theorem~\ref{Baumslag's_converse_groups}. 
Let $B^{\infty}$ be the  countably infinite direct power of a group $B$, and let $B^{k}= B \times \cdots \times B$ be the $k$'th direct power of $B$. 
Since $B^{\infty}$ discriminates $\var{B}$, we for any generator $B$ of $\V$ have: 
$$ \var{A \Wr B^{\infty}}= \var{A} \var{B^{\infty}}= \var{A} \var{B} =\U \V.
$$ Another example is when $\U \V$ has a finite basis rank $n$ ($\U \V$ is generated by its $n$-generator groups) and  $\V$ is locally finite. Then 
$$\var{F_t(\U) \Wr {F_n(\V)}}= \U \V,$$ 
where $t = (n-1)|F_n(\V)|+1$.

\vskip5mm
In this note we are mainly interested in the case of wreath products of abelian groups. The initial result in this direction is proved by G.~Higman (Lemma 4.5 and Example 4.9 in~\cite{Some_remarks_on_varieties}): if $C_p$ and $C_n$ are finite cycles of orders respectively $p$ and $n$ ($p$ is a prime not dividing $n$), then 
$$\var{C_p \Wr C_n}= \var{C_p} \var{C_n}= \A_p \A_n,$$ where, as usual, $\A_n$ is the variety of all abelian groups of exponent dividing $n$. Higman used them to study the varieties, which cannot be generated by their $k$-generator groups for any fixed $k$. 
C.H.~Houghton generalized this for the case of arbitrary finite cycles $A=C_m$ and $B=C_n$. Namely, 
$$\var{C_m \Wr C_n}= \var{C_m}  \var{C_n} = \A_m \A_n$$ 
holds if and only if $m$ and $n$ are coprime 
(this result of Houghton is unpublished, but is repeatedly mentioned  in literature, in particular, by R.G.~Burns in~\cite[page 356]{Burns65} and~\cite{BurnsDiso}, and by Hanna Neumann in~\cite{HannaNeumann}.
Some other cases can be obtained using the discriminating groups. For example, the infinite cyclic group $C$ discriminates the variety of all groups $\A$, so for any abelian group $A$ the equality 
$$\var{A \Wr C}= \var{A}  \var{C} = \var{A} \A$$ holds. Also, the infinite direct (or cartesian) power $C_p^\infty$ discriminates the variety $\A_p$, so we have: 
$$\var{A \Wr C_p^\infty}= \var{A}  \var{C_p^\infty} = \var{A} \A_p.$$
On the other hand $\var{C_p \Wr C_p^k}\not= \A_p \A_p$ 
because $C_p \Wr C_p^k$ is nilpotent and, thus, it does not generate the non-nilpotent variety $\A_p \A_p$ for any positive integer $k$~\cite{HannaNeumann}.

\section{Classification for wreath products of arbitrary abelian groups}
\label{Classification}

In~\cite{AwrB_paper, finitely generated abelian}, generalizing the above mentioned results on abelian groups, we gave a general criterion for varieties, generated by wreath product of arbitrary abelian groups $A$ and $B$. 
Let $B_p=\{b \in B \, | \,  |b| = p^i \text{ for some }i \}$ be the $p$-primary component of $B$. By Pr\"ufer's theorem, every abelian group of finite exponent, in particular $B_p$, is a direct product of finite cycles of prime-power exponents: $B_p = \sum_{i\in I} C_{p^{k_i}}$. Denote by $k'$ the greatest of the exponents $k_i$. Then:

\begin{Theorem}[Theorem 6.1 in~\cite{AwrB_paper}]
\label{classification for groups}
For arbitrary abelian groups $A$ and $B$ the equality
$\var{A \Wr B} =  \var A \var B$
holds if and only if:

\noindent 
{\bf (1)} either at least one of the groups $A$ and $B$ is not of finite  exponent; or

\noindent 
{\bf (2)} if  $\exp A = m$  and $\exp B = n$ are both finite and for every  common prime divisor $p$ of $m$ and $n$, the $p$-primary component 
$B_p = \sum_{i\in I} C_{p^{k_i}}$
of $B$ contains infinitely many direct summands $C_{p^{k'}}$, where $p^{k'}$ is the highest power of $p$ dividing $n$.\end{Theorem}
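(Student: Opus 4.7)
The containment $\var{A \Wr B} \subseteq \var{A}\var{B}$ is immediate from Kaloujnine-Krassner since $A \Wr B$ is an extension of the cartesian power $A^{B}$ by $B$; the substance of the theorem is the reverse containment and its failure in the complementary cases.

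\textbf{Sufficiency.} My strategy is to locate, inside $A \Wr B$, sub-wreath products that already generate $\var{A}\var{B}$ via the Baumslag-Neumanns theorem recalled before~(\ref{EQUATION_free_groups}) and the Houghton-type result for coprime parts. Under condition~(1), if $B$ has infinite exponent then $B$ itself discriminates $\var{B}=\A$ (a torsion-free element supplies a copy of $\Z$; a torsion $B$ with elements of unbounded $p$-order supplies a subgroup $\bigoplus_{k} C_{p^{k}}$, both standard discriminators of $\A$), so Baumslag-Neumanns applied to $A \Wr B$ itself delivers $\var{A}\var{B}\subseteq \var{A\Wr B}$. If instead only $A$ has infinite exponent, a dual argument works: a discriminator $A'\le A$ of $\A=\var{A}$ combined with the wreath-product structure, aided by passage to large direct powers inside $\var{A\Wr B}$, yields a generator of $\A\cdot\var{B}$. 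Under condition~(2), the key observation is that for each common prime $p$ the hypothesis supplies $C_{p^{k'}}^{\infty}\le B_p\le B$, and since $C_{p^{k'}}^{\infty}$ discriminates $\A_{p^{k'}}=\A_{p^{b_p}}$ (where $p^{b_p}$ is the $p$-part of $n$), the subgroup $A\Wr C_{p^{k'}}^{\infty}\le A\Wr B$ generates $\var{A}\A_{p^{k'}}$ by Baumslag-Neumanns. Meanwhile, for each pair $(p,q)$ with $p\mid m$, $q\mid n$, and $p\ne q$, the finite wreath product $C_{p^{a_p}}\Wr C_{q^{b_q}}\le A\Wr B$ generates $\A_{p^{a_p}}\A_{q^{b_q}}$ by Houghton. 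A final structural lemma asserts that the union of all these pieces across all common primes and all coprime prime pairs generates the whole $\A_m\A_n=\var{A}\var{B}$.

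\textbf{Necessity.} Assume $m=\exp A$ and $n=\exp B$ are finite and that for some common prime $p$ the component $B_p$ has only finitely many, say $r$, direct summands $C_{p^{k'}}$. Then $B_p$ embeds in $C_{p^{k'}}^{r}\oplus D$ with $\exp D<p^{k'}$, and correspondingly $A\Wr B$ embeds in a wreath product whose acting factor has a bounded top layer. An iterated-commutator calculation inside the base group $A^{B}$ then yields an identity of $A\Wr B$ combining $p$-th power operations with commutators whose depth is bounded by some $c=c(r,k',a)$, where $p^{a}$ is the $p$-part of $m$. On the other hand, $\var{A}\var{B}$ contains the wreath products $C_{p^{a}}\Wr C_{p^{k'}}^{s}$ for every $s$, and their $p$-nilpotency class grows without bound in $s$; evaluating the identity inside $C_{p^{a}}\Wr C_{p^{k'}}^{c+1}$ will falsify it, whence $\var{A\Wr B}\subsetneq\var{A}\var{B}$.

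\textbf{Main obstacle.} I expect the principal difficulty in the necessity step, where one must isolate the precise commutator-and-power identity together with its quantitative dependence on $r$, $k'$, and $a$: this calls for careful commutator bookkeeping inside the semidirect product $A^{B}\rtimes B$, matched with a concrete falsifying substitution inside $C_{p^{a}}\Wr C_{p^{k'}}^{s}$. A secondary subtlety on the sufficiency side is the final ``assembly'' step: the pieces obtained from Baumslag-Neumanns at common primes and from Houghton at coprime prime pairs each generate only a subvariety of $\var{A}\var{B}$, and combining them into the whole $\A_m\A_n$ requires its own structural argument, since a naive join of subvarieties is typically strictly smaller than the product variety $\A_m\A_n$.
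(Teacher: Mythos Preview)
The paper under review is a survey and does \emph{not} contain a proof of this theorem: it is stated with the attribution ``Theorem~6.1 in~\cite{AwrB_paper}'' and no argument is given here. So there is no in-paper proof to compare your proposal against; all I can do is assess your sketch on its own terms and against the techniques the survey advertises (Baumslag--Neumanns discrimination, Houghton's coprime case, and the manipulation lemmas~\cite[Lemmas~1.1--1.2]{AwrB_paper} used later in the proof of Proposition~\ref{Nilpotent by abelian}).

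Your overall architecture---discrimination for the infinite-exponent and ``infinitely many top summands'' cases, Houghton for the coprime prime pairs, and an explicit separating identity for necessity---is exactly the toolkit the survey points to, and is almost certainly how the cited proof proceeds. Two places in your sketch are genuinely incomplete, however.

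\textbf{First gap (sufficiency, case~(1), $B$ of infinite exponent).} Your dichotomy ``either $B$ has a torsion-free element, or $B$ is torsion with elements of unbounded $p$-order for some fixed prime $p$'' is false: take $B=\bigoplus_{p\text{ prime}} C_p$, which is torsion, has each $p$-component bounded, yet has infinite exponent. You therefore need a third branch (infinitely many primes occurring), and a separate argument that such a $B$ still discriminates $\A$. This is true, but it is not covered by either of your two ``standard discriminators''.

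\textbf{Second gap (sufficiency, case~(1), only $A$ of infinite exponent).} Calling this a ``dual argument'' papers over a real asymmetry: Baumslag--Neumanns is a statement about the \emph{passive} factor discriminating its variety, and there is no off-the-shelf dual with the roles of $A$ and $B$ swapped. Here $\var{B}=\A_n$ for some finite $n$, and you must show $\var{A\Wr B}\supseteq \A\cdot\A_n$ with $B$ possibly very small (e.g.\ $B=C_n$). The phrase ``passage to large direct powers inside $\var{A\Wr B}$'' is the right instinct---this is what Lemmas~1.1--1.2 of~\cite{AwrB_paper} are for---but it needs to be made into an actual argument, and it is not a mirror image of the $B$-infinite case.

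Your identification of the two hard points (the explicit identity in the necessity direction, and the ``assembly'' of the Houghton and discrimination pieces into all of $\A_m\A_n$) is accurate; those are indeed where the work in~\cite{AwrB_paper} lies.
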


The analog of this criterion also holds for direct wreath products.
The theorem shows that for abelian groups $\var{A \Wr B}$ may be distinct from $\var A \var B$ only if both $A$ and $B$ are of finite exponents $m$ and $n$ respectively, and there is a prime common divisor $p$ of $m$ and $n$ such that the $B$ can be presented in the form
$
	B = ( C_{p^{k'}} \oplus  \cdots  \oplus C_{p^{k'}} ) \oplus B',
$
where $p^{k'} \slash \hskip-1.5mm | \exp B'$. For example, for distinct primes $p$ and $q$:
$$
	\var{C_p \Wr 
[ C_q 
\oplus 
(C_p  \oplus C_p)  
\oplus 
(C_{p^2} \oplus  \cdots \oplus C_{p^2} \oplus  \cdots ) ]
} = \A_p \A_{q p^2},
$$
$$
	\var{C_p \Wr 
[C_q 
\oplus 
(C_{p^2} \oplus C_{p^2} \oplus C_{p^2})
\oplus 
(C_{p} \oplus  \cdots 	\oplus C_{p} \oplus  \cdots ) ]
} \not= \A_p \A_{q p^2}.
$$

When the abelian groups $A$ and $B$ are finite, Theorem~\ref{classification for groups} simply implies:

\begin{Theorem}
\label{Generalisation of Houghton's result for finite abelian groups}
For arbitrary  non-trivial finite abelian groups $A$ and $B$ of exponents $m$ and $n$ respectively
$
	\var{A\Wrr B}= \var A  \var B = \A_m \A_n
$
holds if and only if $m$ and $n$ are coprime.
\end{Theorem}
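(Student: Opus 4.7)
The plan is to obtain the result as a direct corollary of Theorem~\ref{classification for groups}. The preliminary step is to identify the varieties generated by $A$ and $B$. I would argue that $\var{A}=\A_m$: since $A$ is abelian of exponent exactly $m$, it contains an element of order $m$ and hence a subgroup isomorphic to $C_m$, giving $\A_m=\var{C_m}\subseteq\var{A}$; the reverse inclusion follows from $A\in\A_m$. The same argument gives $\var{B}=\A_n$, so the target equality to be verified is $\var{A\Wr B}=\A_m\A_n$.

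The main step is then to check conditions {\bf (1)} and {\bf (2)} of Theorem~\ref{classification for groups}. Because $A$ and $B$ are both finite, neither has infinite exponent, so condition {\bf (1)} is unavailable and everything reduces to condition {\bf (2)}. If $\gcd(m,n)=1$ then $m$ and $n$ share no common prime divisor, so {\bf (2)} is satisfied vacuously and the equality $\var{A\Wr B}=\A_m\A_n$ holds.

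Conversely, if some prime $p$ divides both $m$ and $n$, then the $p$-primary component $B_p$ is a subgroup of the finite group $B$ and hence itself finite, so its Pr\"ufer decomposition $B_p=\sum_{i\in I}C_{p^{k_i}}$ has only finitely many summands. In particular $B_p$ cannot contain infinitely many direct summands $C_{p^{k'}}$, condition {\bf (2)} fails, and Theorem~\ref{classification for groups} yields $\var{A\Wr B}\neq\A_m\A_n$.

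I do not anticipate any real obstacle here, since Theorem~\ref{classification for groups} does the heavy lifting; the only small point to handle carefully is that $A$ and $B$ are not assumed cyclic, so identifying $\var{A}=\A_m$ and $\var{B}=\A_n$ relies on the elementary observation that a finite abelian group of exponent $m$ contains a cyclic subgroup of that order, rather than on a direct appeal to the cyclic case treated by Houghton.
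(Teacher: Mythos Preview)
Your proposal is correct and follows exactly the approach the paper takes: the paper simply states that Theorem~\ref{classification for groups} ``simply implies'' this result, and your argument spells out that implication in detail. The extra justification you supply for $\var{A}=\A_m$ and $\var{B}=\A_n$ is the right elementary observation and fills in the only step the paper leaves implicit.
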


Thus, the above mentioned result of Houghton is true not only for cyclic group, but for any finite abelian groups. This fact, however, seems to be known in mathematical folklore.
After our paper~\cite{AwrB_paper} was published in 2001, we presented it to Prof.~C.H.~Houghton and asked about the article where his result on wreath product of cyclic groups is published. He confirmed that it was never published: when his advisor Hanna Neumann was preparing the ``Varieties of Groups''~\cite{HannaNeumann}, C.H.~Houghton informed her about this result, and it was mentioned in the book. Later, however, some problem in proofs was discovered, and the article was never published.


\vskip5mm

For any group sets $\X$ and $\Y$ their cartesian wreath product is defined by $\X \Wr \Y = \{ X\Wr Y \,|\, X\in \X, Y\in \Y\}$ (the direct wreath product $\X \Wrr \Y $ is defined analogously). In~\cite{Metabelien, wreath products Prilozh} we discuss the varieties, generated by wreath products of sets of abelian groups. The following theorem gives full classification for this case also:

\begin{Theorem}[Theorem 7.1 in~\cite{Metabelien}]
\label{classification for groups}
For arbitrary sets of abelian groups $\X$ and $\Y$ the equality
$\var{\X \Wr \Y} = \var{\X} \var{\Y}$
holds if and only if:

\noindent 
{\bf (1)} either at least one of the sets $\X$ and $\Y$ is not of finite  exponent; or

\noindent 
{\bf (2)} if  $\exp \X = m$  and $\exp \Y = n$ are both finite and  for every  common prime divisor $p$ of $m$ and $n$, and for arbitrary positive integer $s$ the set $\Y$ contains a group $B(s)$ such that 
the $p$-primary component 
$B(s)_p = \sum_{i\in I(s)} C_{p^{k_i}}$
of $B(s)$ contains at least $s$ direct summands $C_{p^{k'}}$, where $p^{k'}$ is the highest power of $p$ dividing $n$.
\end{Theorem}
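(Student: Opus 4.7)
The inclusion $\var{\X \Wr \Y} \subseteq \var{\X}\var{\Y}$ is immediate: by Kaloujnine--Krassner each $X \Wr Y$ is an extension of a cartesian power of $X$ by $Y$, hence lies in $\var{X}\var{Y} \subseteq \var{\X}\var{\Y}$.

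For the converse inclusion, I would reduce to the single-group classification (Theorem 6.1 above) by introducing the ``universal'' representatives $\tilde X = \prod_{X \in \X} X$ and $\tilde Y = \prod_{Y \in \Y} Y$. Each $X \in \X$ is both a subgroup and a projection quotient of $\tilde X$, so $\var{\tilde X} = \var{\X}$ and $\exp \tilde X = \exp \X$; analogously for $\tilde Y$. I would then check that $(\tilde X, \tilde Y)$ satisfies condition~(1) or~(2) of Theorem 6.1: the non-trivial case is~(2), where $\tilde Y_p = \prod_{Y \in \Y} Y_p$ is a bounded-exponent abelian $p$-group (hence a direct sum of cyclics by Pr\"ufer), and its top layer $p^{k'-1}\tilde Y_p$ contains the infinite-dimensional $\F_p$-subspace $\bigoplus_s p^{k'-1}B(s)_p \supseteq \bigoplus_s (C_p)^s$ by hypothesis, forcing infinitely many $C_{p^{k'}}$ direct summands. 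Theorem 6.1 then yields $\var{\tilde X \Wr \tilde Y} = \var{\X}\var{\Y}$, so the problem reduces to showing $\tilde X \Wr \tilde Y \in \var{\X \Wr \Y}$.

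For this I would combine two ingredients. The first is the diagonal embedding $(\prod_{X \in \X} X) \Wr Y \hookrightarrow \prod_{X \in \X} (X \Wr Y)$, sending $(f, y)$ to $((f_X, y))_{X \in \X}$, which puts every $\tilde X \Wr Y$ ($Y \in \Y$) in $\var{\X \Wr \Y}$. The second is a prime-by-prime direct-limit realisation: for each common prime $p$, the group $C_p \Wr C_{p^{k'}}^\infty$ -- which generates $\A_p \A_{p^{k'}}$ by Theorem 6.1 -- is the direct limit $\bigcup_s C_p \Wr (C_{p^{k'}})^s$ of its finite subgroups, and each finite piece embeds in some $X \Wr B(s) \in \X \Wr \Y$ (taking $X \in \X$ with $C_p \leq X$, which exists because $p \mid \exp \X$, and using $(C_{p^{k'}})^s \leq B(s)_p$ supplied by~(2)). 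Closure of varieties under direct limits thus places $\A_p \A_{p^{k'}}$ inside $\var{\X \Wr \Y}$ for every common prime~$p$; combined with $\var{\X}, \var{\Y} \subseteq \var{\X \Wr \Y}$ (obtained by realising each $X \in \X$ as a base subgroup and each $Y \in \Y$ as a top quotient of $X \Wr Y$), the standard prime-decomposition of abelian-by-abelian varieties then gives $\var{\X}\var{\Y} \subseteq \var{\X \Wr \Y}$.

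Necessity is handled contrapositively, following the template of Theorem 6.1: if (1) and (2) both fail, a common prime $p \mid \gcd(m,n)$ and a bound $s_0$ on the number of $C_{p^{k'}}$-summands in $Y_p$ (uniform over $Y \in \Y$) yield, via the Higman-style construction for $C_p \Wr (C_{p^{k'}})^{s_0}$, an identity of $\var{\X \Wr \Y}$ that is violated by $C_p \Wr C_{p^{k'}}^\infty \in \A_p \A_{p^{k'}} \subseteq \var{\X}\var{\Y}$. The hardest step is the second ingredient of sufficiency -- the passage from individual $\tilde X \Wr Y$ to the full $\tilde X \Wr \tilde Y$ inside $\var{\X \Wr \Y}$ -- since the naive structural embedding $A \Wr (B_1 \times B_2) \hookrightarrow (A \Wr B_1) \Wr B_2$ lands in the strictly larger variety $\var{A}\var{B_1}\var{B_2}$. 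This obstruction is precisely why condition~(2) is formulated in terms of \emph{arbitrary} $s$: the direct-limit argument requires arbitrarily large building blocks $B(s)$ in $\Y$ to approximate $C_{p^{k'}}^\infty$ from within the set.
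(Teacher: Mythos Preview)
The present paper is a survey and does not supply a proof of this theorem; it is quoted verbatim from \cite{Metabelien}. So there is no ``paper's own proof'' here to compare against, and your plan must stand on its own.

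Your overall architecture is sensible, and you correctly isolate the real difficulty: passing from the groups $\tilde X \Wr Y$ (one $Y$ at a time) to $\tilde X \Wr \tilde Y$ inside $\var{\X \Wr \Y}$. You also correctly note that the naive embedding $A \Wr (B_1 \times B_2) \hookrightarrow (A \Wr B_1) \Wr B_2$ is useless here. But having identified the obstruction, you do not actually remove it. Your two ``ingredients'' do not combine into a proof of $\tilde X \Wr \tilde Y \in \var{\X \Wr \Y}$; instead you silently abandon that target and attempt to reach $\A_m \A_n$ directly via a ``standard prime-decomposition of abelian-by-abelian varieties''. That step is a black box: you would need to show that $\A_m \A_n$ is the join of $\A_m$, $\A_n$, and the varieties $\A_p \A_{p^{k'}}$ over the common primes $p$, and this is neither standard nor obviously true. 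For instance, with $m=p$ and $n=pq$ your pieces are $\A_p$, $\A_{pq}$, $\A_p\A_p$, and it is not at all clear that their join is $\A_p\A_{pq}$; the group $C_p \Wr C_{pq}$ must somehow be recovered, and your direct-limit argument only produces $C_p \Wr C_p^s$ and $C_p \Wr C_q^s$ separately.

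A repair along your own lines would run the direct-limit argument with the \emph{full} top group rather than prime-by-prime: take $A \in \X$ with $\exp A = m$ (a finite product of groups in $\X$ suffices), and show for each $s$ that $A \Wr C_n^s$ lies in $\var{\X \Wr \Y}$. For this one needs a single $Y \in \Y$ containing $C_n^s$, or at least that $A \Wr C_n^s \in \var{\X \Wr \Y}$ can be obtained from several $Y$'s via the embedding $A \Wr (B_1 \times B_2) \hookrightarrow (A \Wr B_1) \times (A^{|B_1|} \Wr B_2)$ when $|B_1|$ and $\exp A$ interact well (this is essentially how \cite{AwrB_paper, Metabelien} proceed). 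The coprime primes of $n$ are harmless for this embedding; it is exactly the common primes that require infinitely many top-layer summands within a single $B(s)$, which is what condition~(2) provides. Your necessity argument is fine in outline.
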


Again, the analog of this criterion also holds for direct wreath products. The classification we obtained can also be used to get other results, which are listed in~\cite{AwrB_paper}--\cite{wreath products Prilozh}. For example, the above mentioned theorem of Baumslag and Neumanns~\cite{B3,B+3N} about wreath products with discriminating groups in fact is a {\it necessary and sufficient} condition for abelian groups:

\begin{Theorem}
\label{Baumslag's_converse_groups}
For arbitrary non-trivial abelian groups $A$ and $B$ the wreath product $A \Wr B$ discriminates the variety $\var{A} \var{B}$ if and only if $B$ discriminates the variety $\var{B}$.
\end{Theorem}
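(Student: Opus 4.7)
The plan is to split into the two directions. The ``if'' direction is immediate from the Baumslag--Neumann discrimination theorem recalled in Section~\ref{Introduction}: if $B$ discriminates $\var{B}$ and $A$ generates $\var{A}$, then $A\Wr B$ discriminates $\var{A}\var{B}$. The substance lies in the converse, which I prove contrapositively: assuming $B$ does not discriminate $\var{B}$, I exhibit a finite set of non-trivial elements of $F_r(\var{A}\var{B})$ on which no homomorphism into $A\Wr B$ is simultaneously non-trivial.

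First, $\exp B$ must be finite. If $\exp B=\infty$ then $\var{B}=\A$, and every non-trivial abelian group of infinite exponent discriminates $\A$: given non-zero $w_1,\ldots,w_t\in F_r(\A)=\Z^r$, one picks $b\in B$ of sufficiently large (finite or infinite) order and generic integers $n_j$, producing a homomorphism $\Z^r\to\langle b\rangle\subseteq B$ under which each $w_i$ stays non-zero. So $\exp B=n<\infty$; an abelian group of exponent $n$ discriminates $\A_n$ iff for every prime $p\mid n$ the component $B_p$ has infinitely many direct summands of the maximal order $p^{k'}$, so by hypothesis some prime $p\mid n$ admits only finitely many (say $s$) such summands. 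If in addition $p\mid\exp A$, then $p$ is a common prime divisor and condition~(2) of Theorem~\ref{classification for groups} fails, forcing $\var{A\Wr B}\subsetneq\var{A}\var{B}$; thus $A\Wr B$ does not even generate $\var{A}\var{B}$, let alone discriminate it. The main case is $p\nmid\exp A=m$.

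In this case $A^B$ has exponent $m$ coprime to $p$. Set $r=s+1$, and for each of the $p^r-1$ non-zero $V=(V_1,\ldots,V_r)\in\F_p^r$ with integer representatives $V_j\in\{0,\ldots,p-1\}$, form
\[
\tilde V \;=\; x_1^{m(n/p)V_1}\cdots x_r^{m(n/p)V_r}\in F_r(\var{A}\var{B}).
\]
A short computation in the abelianisation $F_r(\var{A}\var{B})^{ab}\cong (C_{mn})^r$, using $\gcd(m,p)=1$, shows $\tilde V^m\neq 1$ whenever $V\neq 0$. Given any homomorphism $\phi\colon F_r(\var{A}\var{B})\to A\Wr B$ with $\phi(x_j)=(f_j,b_j)$, the top projection $\pi\colon A\Wr B\to B$ yields $\pi(\phi(\tilde V))=m(n/p)\sum_j V_j b_j$; the $B_{p'}$-component vanishes because $(n/p)B_{p'}=0$, whereas the $B_p$-component lies in $p^{k'-1}B_p\cong (C_p)^s$ and equals $m\cdot\psi(V)$ for the $\F_p$-linear map $\psi\colon\F_p^r\to\F_p^s$ defined by $\psi(e_j)=(n/p)b_{j,p}$, where $b_{j,p}$ is the $p$-primary part of $b_j$.

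Because $r=s+1$, the map $\psi$ has non-trivial kernel, so $\psi(V_0)=0$ for some non-zero $V_0$; this places $\phi(\tilde V_0)\in\ker\pi=A^B$, whence $\phi(\tilde V_0^m)=\phi(\tilde V_0)^m=1$ since $\exp A^B=m$. Consequently no $\phi$ keeps all elements of the finite set $\{\tilde V^m : V\in\F_p^r\setminus\{0\}\}\subset F_r(\var{A}\var{B})$ simultaneously non-trivial, so $A\Wr B$ fails to discriminate $\var{A}\var{B}$, completing the contrapositive. The principal obstacle is the engineering of the lifts: the exponents $m(n/p)$ are chosen so that the top projection of $\phi(\tilde V)$ is controlled by an $\F_p$-linear map $\F_p^r\to\F_p^s$ whose codomain matches the bound $s$ on $p^{k'}$-summands of $B_p$, and the final $m$-th power exploits $\gcd(m,p)=1$ to turn the combinatorial kernel of $\psi$ into an actual relation trivial in $A\Wr B$.
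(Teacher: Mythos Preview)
The paper itself does not give a proof of this theorem; it is stated as a corollary of the author's classification results and referred to the papers \cite{AwrB_paper}--\cite{wreath products Prilozh}. So there is no ``paper's own proof'' to compare line by line, and your argument has to be judged on its own merits.

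Your ``if'' direction is fine, and your main construction for the converse is clean and correct: when $\exp A=m<\infty$, $\exp B=n<\infty$, and $p\mid n$ with $p\nmid m$, the words $\tilde V^{\,m}$ together with the $\F_p$-linear map $\psi\colon\F_p^{s+1}\to\F_p^{s}$ do exactly what you claim. The verification that $\tilde V^{\,m}\neq 1$ in the abelianisation $(C_{mn})^r$ and that $\pi(\phi(\tilde V))$ lands in $p^{k'-1}B_p\cong(C_p)^s$ are both correct.

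There is, however, a genuine gap: you never treat the case $\exp A=\infty$. After establishing $\exp B=n<\infty$, you split on whether $p\mid\exp A$ or $p\nmid\exp A=m$, tacitly assuming $m$ is finite. If $\exp A=\infty$ then $\var A=\A$, and your appeal to Theorem~\ref{classification for groups} fails outright: condition~(1) of that theorem is satisfied, so $\var{A\Wr B}=\A\,\A_n$ and one cannot conclude non-generation. Nor does your word construction survive, since it uses $m$-th powers to annihilate the base $A^B$; with $\exp A=\infty$ there is no such exponent. Yet the theorem still asserts that $A\Wr B$ fails to \emph{discriminate} $\A\,\A_n$ in this situation, and that needs a separate argument.

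One way to close the gap: keep $\tilde V=\prod_j x_j^{(n/p)V_j}$, take $r=s+2$ so that $\psi\colon\F_p^{r}\to\F_p^{s}$ has kernel of dimension at least $2$, and use the finite family $\{[\tilde V,\tilde{V'}]\}$ over linearly independent pairs $V,V'\in\F_p^{r}$. For any $\phi$ into $A\Wr B$ there exist independent $V_0,V_0'$ with $\phi(\tilde{V_0}),\phi(\tilde{V_0'})\in A^B$, whence their commutator is trivial; and one checks (for example by evaluating in $\Z\Wr C_{p^{k'}}\in\A\,\A_n$) that $[\tilde V,\tilde{V'}]\neq 1$ in $F_r(\A\,\A_n)$ whenever $V,V'$ are $\F_p$-independent. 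This replaces the $m$-th-power trick by the abelianness of the base and handles the missing case.
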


The analog of this also holds for wreath products of sets of abelian groups.

\section{Wreath products of finite groups in other classes}
\label{groups_close}

After we fully classified all the cases, when the equality (\ref{EQUATION_general}) holds for abelian groups, it is natural to consider the same problem for non-abelian groups. 
Clearly, this is a much more complicated objective, as there are continuum varieties of groups, and there are non-abelian varieties, in which all finite groups are abelian (see, for example, the result of A.Yu.~Olshanskii~\cite{{Olshanskii all finite groups are abelian}}), whereas the set of abelian and metabelian varieties is countable (see the theorem of D.E.~Cohen~\cite{CohenMetabelian}). We have posed these as problems in~\cite{AwrB_paper, Metabelien, Two problems}.

\vskip3mm

The simplest step to start with is consideration of the varieties $\var{A \Wr B}$ for not necessarily abelian, {\it finite} groups $A$ and $B$. For this case there is a necessity condition based on a theorem of A.L.~Shmel'kin~\cite{ShmelkinOnCrossVarieties}, which strongly restricts the groups we should deal with:

\begin{Proposition}[a corollary of Shmel'kin's theorem]
\label{Shmelkin's Corollary}
If the non-trivial varieties of groups $\U$ and $\V$ are generated by finite groups $A$ and $B$ respectively, then 
$
\var{A \Wr B}=\var{A} \var{B}=\U \V
$
holds only if $B$ is abelian, $A$ is nilpotent, and the exponents of $A$ and $B$ are coprime.
\end{Proposition}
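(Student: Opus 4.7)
The plan is to derive the proposition directly from Shmel'kin's classification of Cross varieties~\cite{ShmelkinOnCrossVarieties}, which asserts that a product $\U\V$ of two non-trivial varieties of groups is generated by a finite group if and only if $\U$ is itself generated by a finite nilpotent group (so $\U\sub\Ni_c$ for some $c$), $\V$ is generated by a finite abelian group (so $\V\sub\A$), and $\gcd(\exp\U,\exp\V)=1$.

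The first step is to observe that the hypothesis already places $\U\V$ inside the class of Cross varieties. Since $A$ and $B$ are both finite, the cartesian wreath product $A\Wr B$ coincides with the direct wreath product $A\Wrr B$ and is itself a finite group. Hence the assumed equality $\var{A\Wr B}=\U\V$ exhibits $\U\V$ as being generated by a finite group, so the hypothesis of Shmel'kin's theorem is met.

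Applying Shmel'kin's theorem then yields that $\U$ is a nilpotent variety and $\V$ is an abelian variety. Since $A\in\U$ and $B\in\V$, the group $A$ is nilpotent and $B$ is abelian. Moreover, for any variety $\W$ generated by a finite group $G$ one has $\exp\W=\exp G$, because the law $x^n\equiv 1$ holds on $\W$ precisely when it holds on $G$; therefore $\exp\U=\exp A$ and $\exp\V=\exp B$, and the coprimality clause in Shmel'kin's theorem translates directly into the coprimality of $\exp A$ and $\exp B$. There is no genuine obstacle in the argument beyond citing Shmel'kin's theorem: all of the nontrivial content is stored in that result, and the remaining verifications (finiteness of $A\Wr B$ and the equality of exponents of a finite group and the variety it generates) are immediate.
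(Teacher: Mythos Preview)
Your proof is correct and follows essentially the same approach as the paper: observe that $A\Wr B$ is a finite group (the paper even gives its order $|A|^{|B|}\cdot |B|$), so $\U\V$ is generated by a finite group, and then invoke Shmel'kin's theorem. Your additional remarks---that $A\in\U$ and $B\in\V$ inherit nilpotency and commutativity, and that $\exp\U=\exp A$, $\exp\V=\exp B$---make explicit the translation from variety conditions to group conditions that the paper leaves implicit.
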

\begin{proof}
By Shmel'kin's theorem $\U \V$ can be generated by a finite group if and only if $\V$ is abelian, $\U$ is nilpotent, and the exponents of $\V$ and $\U$ are coprime~\cite{ShmelkinOnCrossVarieties}. It remains to recall that the wreath product $A \Wr B$ is finite of order $|A|^{|B|}\cdot |B|$.
\end{proof}

Denote by $\Ni_{c,m}$ the variety of all groups of nilpotency class at most $c$ and of exponent dividing $m$. Clearly $\Ni_{c,m} = \Ni_c \cap \B_m$ holds. In this notation Proposition~\ref{Shmelkin's Corollary} can be formulated shorter: {\it If (\ref{EQUATION_general}) holds for finite non-trivial groups $A$ and $B$, then  $A \in \Ni_{c,m}$ and $B \in \A_n$, where $m$ and $n$ are coprime}. The condition of Proposition~\ref{Shmelkin's Corollary} is not sufficient. An example showing this for some small values for nilpotency class and for exponents (thus, for a case ``very near" to the abelian groups) is displayed in~\cite{Metabelien,  wreath products Prilozh}:


\begin{Example}
\label{smallExample}
Let $A=F_2(\Ni_{2,3})$ be the free group of rank $2$ in the variety $\Ni_{2,3}$. Then exponents of $A$ and $C_2$ are coprime, but, nevertheless,
$
	  \var{A \Wr C_2}\not= \var{A} \var{C_2} =\Ni_{2,3}  \A_2.
$
\end{Example}
\begin{proof}
Let us present $A$ as
$
        A=F_2(\Ni_{2,3}) =\langle x_1,x_2 \,|\,  [x_1,x_2,x_1]=[x_1,x_2,x_2]=x_1^3=x_2^3=1 
\rangle
$
and define a group $R$ to be the extension of $A$ by means of the group of 
operators generated by automorphisms $\nu_1,\nu_2\in \Aut{A}$ given by:
$\nu_1 :  x_1\mapsto x_1^{-1}, 	\hskip0mm
\nu_1 :	 x_2\mapsto x_2; 	\hskip1mm
\nu_2 :  x_1\mapsto x_1, 	\hskip0mm 
\nu_2 :  x_2\mapsto x_2^{-1}
$.
Clearly $\langle \nu_1,\nu_2 \rangle \cong C_2 \oplus C_2 \in \A_2$. As it is shown in~\cite{BurnsDiso}, $R$ is a critical group, that is, a finite group which does not belong to the variety generated by its proper factors. Each of its proper factors, but not $R$ itself, satisfies the identity
$
        [[x_1,x_2],[x_3,x_4],x_5]\equiv 1.
$
On the other hand the wreath product $A \Wr C_2$ satisfies this identity because its second commutator subgroup lies in the center. Thus, $R \notin \var{A \Wr C_2}$.
\end{proof}

The following is a partial converse of Proposition~\ref{Shmelkin's Corollary} in the sense that even if (\ref{EQUATION_general}) does not hold for groups $A$ and $B$, satisfying Shmel'kin's conditions, it will hold if we replace $B$ by its finite direct power:

\begin{Proposition}
\label{Nilpotent by abelian}
Let $A \in \Ni_{c,m}$ be any finite group of nilpotency class at most $c>1$ and of exponent $m$, and let $B$ be any finite abelian group of exponent $n$, where $m$ and $n$ are coprime. Then 
$
	  \var{A \Wr B^{c+1}} = \var{A} \var{B} = \var{A} \A_n
$
holds. And, moreover, if $A$ generates $\Ni_{c,m}$, then:
$
	  \var{A \Wr B^{c}} = \var{A} \var{B} = \Ni_{c,m} \A_n.
$
\end{Proposition}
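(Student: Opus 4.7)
The inclusion $\var{A \Wr B^{c+1}} \subseteq \var{A}\var{B}$ is immediate from the Kaloujnine--Krasner theorem, since $A \Wr B^{c+1}$ is itself an extension of $A^{|B|^{c+1}} \in \var{A}$ by $B^{c+1} \in \var{B}$. The substantive task is the reverse inclusion, which I would reduce to a stabilization statement for the chain $\var{A \Wr B^{s}}$ in $s$.

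First, the product variety $\var{A}\var{B}$ sits inside the Cross variety $\Ni_{c,m}\A_n$ (Shmel'kin's conditions on $m$, $n$, $c$ being met), so it is itself Cross with some finite basis rank $\rho$. The result recalled in the Introduction, applied to the locally finite $\V = \A_n$, then gives
\begin{equation*}
\var{A}\var{B} = \var{F_t(\var{A}) \Wr F_\rho(\A_n)}, \qquad t = (\rho - 1)\,n^{\rho} + 1.
\end{equation*}
Since $B$ has exponent $n$ and hence contains $C_n$, one has $F_\rho(\A_n) \cong C_n^{\rho} \hookrightarrow B^{\rho}$. Moreover $F_t(\var{A})$, being a finite group in $\var{A}$, is a section of some direct power $A^{M}$; taking wreath products is compatible with sections on the base, and the diagonal embedding $A^{M} \Wr B^{\rho} \hookrightarrow (A \Wr B^{\rho})^{M}$ then places $F_t(\var{A}) \Wr F_\rho(\A_n)$ inside $\var{A \Wr B^{\rho}}$. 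The problem thus reduces to proving
\begin{equation*}
\var{A \Wr B^{s}} \subseteq \var{A \Wr B^{c+1}} \quad \text{for every } s \geq c+1,
\end{equation*}
which, combined with the Baumslag--Neumanns fact $\var{A \Wr B^{\infty}} = \var{A}\var{B}$ (since $B^{\infty}$ discriminates $\A_n$), closes the argument.

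The main obstacle is this stabilization claim, which I would prove in the form $A \Wr B^{s+1} \in \var{A \Wr B^{s}}$ for every $s \geq c+1$. The heuristic is that the identities of $A \Wr B^{s+1}$ are fed by two sources: laws inherited from the base $\var{A}$ (nilpotent of class $c$) and laws reflecting the semidirect action of $B^{s+1}$ on the base. Because nested commutators of length greater than $c$ in the base are trivial, only $c$ independent conjugation ``directions'' from the top can interact non-trivially with the base's commutator structure, and the extra ``$+1$'' accommodates the quotient copy of $B$ itself. Technically, the argument would go through Shmel'kin's description of laws in verbal wreath products, or equivalently via an explicit analysis of the lower central series of $A \Wr B^{s}$, showing that any verbal failure witnessed in $A \Wr B^{s+1}$ is already detectable inside $A \Wr B^{c+1}$. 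For the sharper bound $B^{c}$ in the case when $A$ generates the entire variety $\Ni_{c,m}$, the extra top factor becomes unnecessary: the $c$-fold commutators in $A$ itself already freely realize the universal commutator structure of $\Ni_{c,m}$, so only $c$ conjugators from $B$ are needed.
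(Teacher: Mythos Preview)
Your reduction is sound up to the point where you obtain $\var{A}\var{B} \subseteq \var{A \Wr B^{\rho}}$ for the basis rank $\rho$ of $\var{A}\var{B}$; this is essentially how the paper argues too. The gap is the stabilization step. Your claim that $A \Wr B^{s+1} \in \var{A \Wr B^{s}}$ for all $s \ge c+1$ is exactly the substantive content of the proposition, and you offer only a heuristic for it: the phrase ``only $c$ independent conjugation directions can interact non-trivially'' is suggestive but is not an argument, and nothing in your sketch explains why the threshold is $c+1$ rather than some larger function of $c$, $m$, $n$. Invoking Shmel'kin's description of laws in verbal wreath products or ``an explicit analysis of the lower central series'' does not help unless you actually carry it out, and doing so would amount to reproving the result you are trying to use.

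The paper bypasses stabilization entirely by importing the precise numerical bound from Burns~\cite{Burns65, BurnsDiso}: the variety $\Ni_{c,m}\A_n$ has basis rank exactly $c$, and every critical group in $\Ni_{c,m}\A_n$ is at most $(c{+}1)$-generator. For the second equality (when $A$ generates $\Ni_{c,m}$) this gives $\rho = c$ immediately, and your own argument with $\rho = c$ is then complete and coincides with the paper's. For the first equality the paper works with critical groups of $\var{A}\var{B}$: since criticality is intrinsic, each such group is a critical group of the ambient $\Ni_{c,m}\A_n$ and hence $(c{+}1)$-generator, so its abelian quotient embeds in $B^{c+1}$; a short chain of section/power lemmas then places each critical group inside $\var{A \Wr B^{c+1}}$. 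In either case the Burns bound is the decisive input, not an afterthought, and once you cite it no stabilization argument is needed.
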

We omit the case $c=1$ because in that case $\Ni_{c,m}$ is abelian, and we covered that case earlier. Also, notice that in the second equality we took lower direct power of $B$. 
\begin{proof}
Let us start from the second equality. As it is proved in~\cite{Burns65, BurnsDiso}, in current circumstances  $\Ni_{c,m} \A_n$ has basis rank $c$, that is, this variety is generated by its $c$-generator groups. 
By~\cite[Proposition 21.13]{HannaNeumann} the $c$-generator free group $F_c(\Ni_{c,m} \A_n)$ is an extension of the $t$-generator free group $F_t(\Ni_{c,m})$ of the variety $\Ni_{c,m}$ by the $c$-generator free group $F_c(\A_n)=C_n^c$ of the variety $\A_n$, where the rank $t$ is calculated by Schreier's formula: $t = (n-1)|F_c(\A_n)|+1 = (n-1)n^c + 1$. 
By the Kaloujnine-Krassner theorem~\cite{KaloujnineKrasner} this extension is embeddable into the cartesian wreath product $F_t(\Ni_{c,m}) \Wr C_n^c$. Since the latter wreath product still is within the product variety $\Ni_{c,m} \A_n$, we get that $\var{F_t(\Ni_{c,m}) \Wr C_n^c} = \Ni_{c,m} \A_n$.

Since $A$ generates $\Ni_{c,m}$,  the group $F_t(\Ni_{c,m})$ by~\cite[Theorem 15.4]{HannaNeumann} can be embedded into the cartesian (in this case also direct, as $A$ and $t$ are finite) power $A^{A^t}$. Thus: $\var{A^{A^t} \Wr C_n^c} = \Ni_{c,m} \A_n$.
Applying~\cite[Lemma 1.1]{AwrB_paper} for the case when $\X = \{A\}$, \, $\Y = \{C_n^c\}$ and $X^*=A^{A^t}$, we get that
$A^{A^t} \Wr C_n^c \in \var{A \Wr C_n^c}$. There only remains to notice that if the exponent of $B$ is $n$, then the direct power $B^c$ contains a subgroup isomorphic to $C_n^c$. Combining these steps, we get:
$$
F_c(\Ni_{c,m} \A_n) \hookrightarrow
F_t(\Ni_{c,m}) \Wr C_n^c \hookrightarrow A^{A^t} \Wr C_n^c 
\in \var{A \Wr C_n^c} \subseteq \var{A \Wr B^c}.
$$

\vskip4mm
Turning to the proof of the first equality, notice that $\var{A}$ may be a proper subvariety of $\Ni_{c,m}$. This also includes the case, when $A$ is abelian, so an even stronger equality holds: $\var{A \Wr B} = \var{A} \var{B} = \A_m \A_n$.

Being a locally finite variety, $\Ni_{c,m} \A_n$ is generated by its critical groups~\cite[Proposition 51.41]{HannaNeumann}. According to~\cite[Statement 2.1]{Burns65} all critical groups in $\Ni_{c,m} \A_n$ are at most $(c+1)$-generator. Each of such critical groups $K$ is an extension of a group $P \in \Ni_{c,m}$ by an at most $(c+1)$-generator subgroup $Q \in \A_n$. Assume the group $P$ is $l$-generator for some $l$ (it is finitely generated, as it is a subgroup of finite index in a finitely generated group).

 $K$ is embeddable into the wreath product $P \Wr Q$. The group $P$ is an epimorphic image of the free group $F_l(\var{A})$ of rank $l$. The latter is embeddable into the cartesian power $A^{A^l}$. So we have two differences from the previous case: $F_l(\var{A})$ may not be equal to $F_l(\Ni_{c,m})$ and $P$ may not be embeddable into $A^{A^l}$. But $P$ still is a factor of $A^{A^l}$, that is, an epimorphic image of a subgroup $S$ of $A^{A^l}$. 

Applying~\cite[Proposition 22.11]{HannaNeumann} (or~\cite[Lemma 1.1]{AwrB_paper}), we get that 
$P \Wr Q$ belongs to $\var{S \Wr Q}$. By~\cite[Proposition 22.12]{HannaNeumann} (or by~\cite[Lemma 1.1]{AwrB_paper}) we also have $S \Wr Q \in \var{A^{A^l} \Wr Q}$. We can again apply~\cite[Lemma 1.1]{AwrB_paper} to get that $A^{A^l} \Wr Q \in \var{A \Wr Q}$. It remains to notice that the exponent $n'$ of $Q$ is a divisor of $n$ and, thus, $Q$ is a subgroup of $C_{n'}^{c+1}$. The latter is a subgroup of $B^{c+1}$ because $B$ has exponent $n$ and contains at least one direct summand $C_n$ (clearly $C_{n'} \hookrightarrow C_n$). By~\cite[Proposition 22.13]{HannaNeumann} (or by~\cite[Lemma 1.2]{AwrB_paper}) we get that $A \Wr Q \in \var{A \Wr B^{c+1}}$. Combination of these steps gives:
$$
K \hookrightarrow P \Wr Q 
\in \var{S \Wr Q} 
\subseteq \var{A^{A^t} \Wr Q} 
\subseteq \var{A \Wr Q} 
\subseteq \var{A \Wr B^{c+1}},
$$
which completes the proof.
\end{proof}

In contrast with the previous example, we get:
\begin{Example}
\label{smallExampleContinued}
If $A=F_2(\Ni_{2,3})$ is the group of Example~\ref{smallExample}, then:
$
	  \var{A \Wr (C_2 \oplus C_2)} = \var{A} \var{C_2 \oplus C_2} =\Ni_{2,3}  \A_2.
$
\end{Example}
\begin{proof}
We just apply the second equality of Proposition~\ref{Nilpotent by abelian} and take into account that the bases rank of the variety $\Ni_{2,3} \A_2$ is 2 (see~\cite{Burns65, BurnsDiso}).
\end{proof}

Comparison of Example~\ref{smallExample}, Proposition~\ref{Nilpotent by abelian} and  Example~\ref{smallExampleContinued} displays how different situation we may have if we proceed from consideration of wreath products of abelian groups to wreath products of nilpotent and abelian groups. 
They also show, that for finite groups, satisfying Shmel'kin's condition, the equality
$
\var{A \Wr B}=\var{A} \var{B}
$
always holds, if the group $B$ is ``large'' in the sense that it contains sufficiently many copies of $C_n$. Thus, the equality can only fail for ``small'' groups $B$, which makes their further study and complete classification a reasonable objective.

\end{document}